\documentclass[english]{article}
\usepackage{graphicx} 
\usepackage{tikz}
\usepackage{tikz-3dplot}
\usepackage{pgfplots}
\usepackage{geometry}
\geometry{verbose,tmargin=3cm,bmargin=3cm,lmargin=3cm,rmargin=3cm,headheight=3cm,headsep=3cm,footskip=2cm}
\usepackage{multirow}
\usepackage{diagbox}
\usepackage{amsmath, bm}
\usepackage{amsthm}
\usepackage{amssymb}
\usepackage{booktabs}
\usepackage{varwidth}
\usepackage{array}
\usepackage{tabularray}
\PassOptionsToPackage{normalem}{ulem}
\usepackage{ulem}
\usepackage{xurl}
\makeatletter
\usepackage{xcolor}
\usepackage{color}
\usepackage{algorithm}
\usepackage[noend]{algpseudocode}
\usepackage{graphicx}
\usepackage[utf8]{inputenc}
\usepackage[font=small, labelfont=bf]{caption}
\usepackage{float}
\usepackage{cancel}

\usepackage[hidelinks]{hyperref}
\usepackage[style=ieee,backend=biber,doi=true,url=false,giveninits=true]{biblatex}

\DeclareFieldFormat{doi}{\href{https://doi.org/#1}{doi:\nolinkurl{#1}}}

\addbibresource{references.bib}
\setlength\bibitemsep{0pt plus 0.3ex}
\DeclareNameAlias{default}{given-family}
\DeclareNameAlias{sortname}{given-family}

\makeatother

\usetikzlibrary{calc,arrows.meta,patterns}

\definecolor{myblue}{RGB}{45,130,220}
\definecolor{myorange}{RGB}{240,140,40}

\newtheorem{lem}{\protect\lemmaname}
\newtheorem{thm}{\protect\theoremname}

\newtheorem{rem}{\protect\remarkname}

\providecommand{\theoremname}{Theorem}
\providecommand{\remarkname}{Remark}

\makeatother

\providecommand{\lemmaname}{Lemma}

\usepackage{algpseudocode}
\newlength\fwidth
\makeatletter
\renewcommand{\ALG@beginalgorithmic}{\small}

\algrenewcommand\textproc{} 
\makeatother

\providecommand{\keywords}[1]
{
  \small	
  \textbf{\textit{Keywords---}} #1
}

\newcommand{\cp}{\mathrm{cp}}
\newcommand{\cpbar}{\bar{\mathrm{cp}}}
\newcommand{\cpx}{\cp(\bm{x})}
\newcommand{\cpbarx}{\bar{\cp}(\bm{x})}
\newcommand{\B}{\mathcal{B}}
\newcommand{\I}{\mathcal{I}}
\newcommand{\E}{\mathcal{E}}
\renewcommand{\S}{\mathcal{S}}

\DeclareMathOperator*{\argmin}{arg\,min}

\graphicspath{{figures/}}

\numberwithin{equation}{section}



\let\oldnl\nl
\newcommand{\nonl}{\renewcommand{\nl}{\let\nl\oldnl}}
\makeatother

\begin{document}
\title{The Closest Point Method for Surface PDEs with General Boundary Conditions}
\author{Tony Wong\thanks{Department of Mathematics, University of
California, Los Angeles, USA} \and
Colin B. Macdonald\thanks{Department of Mathematics, The University of British Columbia, Canada} \and
Byungjoon Lee\thanks{Department of Mathematics, The Catholic University of Korea, Republic of Korea}\textsuperscript{\ \ ,\P }}
\maketitle
\begingroup
\renewcommand\thefootnote{\fnsymbol{footnote}}
\footnotetext[5]{Corresponding author. Email: blee@catholic.ac.kr}
\endgroup
\maketitle

\begin{abstract}
We generalize the closest point method (CPM) to solve surface partial differential equations with general boundary conditions. The proposed extrapolation method provides a unified framework for treating a broad class of inhomogeneous Neumann and Robin boundary conditions within the framework of CPM. The accuracy and robustness of the method are demonstrated through numerical convergence studies of an elliptic problem, Steklov eigenvalue problems, and a nonlinear reaction–diffusion system.
\end{abstract}

\keywords{Closest point method, surface computation, implicit surfaces, partial differential equations, boundary conditions}

\section{Introduction}

Many partial differential equations (PDEs) arising in scientific and engineering problems are defined on curved surfaces, which requires robust and accurate numerical methods to resolve the geometry. The closest point method (CPM) \cite{ruuth2008simple} is a versatile embedding method for numerically solving PDEs on curved geometry, with applications such as pattern formation \cite{macdonald2013simple, charette2020pattern, marz2016embedding,rozada2014stability}, geometry and image processing \cite{tian2009segmentation, auer2012real, biddle2013volume, king2024closest, arteaga2015laplace}, and first-passage time problems of diffusion \cite{iyaniwura2021simulation, iyaniwura2021optimization}. 

However, there is limited discussion on the uses of CPM for open surfaces.
It remains an open challenge to extend the CPM to surface PDEs with general boundary conditions beyond the homogeneous Neumann and Dirichlet types \cite{macdonald2011solving}.
In this paper, we provide a simple and accurate extrapolation technique for the CPM to tackle a broad class of boundary conditions.
Our work is particularly relevant for applications where boundary effects are critical, such as fluid dynamics on curved interfaces and biological membrane modeling.

\subsection{Terminology and notations}

Let $\S$ be an open surface --- a smooth surface embedded in $\mathbb{R}^3$, with a co-dimension one boundary $\partial \S$. For $\bm{x} \in \S$, let $\mathbf{N}(\bm{x})$ be a unit normal to $\S$.  For $\bm{y} \in \partial \S$, let $\mathbf{T}(\bm{y})$ a unit tangent to $\partial \S$, and $\mathbf{n}(\bm{y})$ the unit outward co-normal to $\partial \S$, as in Figure~\ref{fig:terminology}. Let $\nabla_S$ be the surface gradient on $\S$ and $\partial_n =\mathbf{n} \cdot \nabla_\S $ be the co-normal derivative defined on $\partial\S$. In this paper, we consider the following boundary condition for a function $u$ defined on $\S$:
\begin{equation}\label{eqn:gbc}
\partial_n u \big\vert_{\bm{y}} = j\left(\bm{y},u(\bm{y})\right), \qquad \bm{y} \in \partial\S.
\end{equation}
The form \eqref{eqn:gbc} includes many commonly used boundary conditions.
For example, the homogeneous Neumann condition corresponds to $j = 0$, while the Robin condition corresponds to $j = -\kappa u$.

The \emph{closest point function} of $\S$ is defined by  $\cpx := \argmin_{\bm{y} \in \S} \| \bm{x} - \bm{y} \|$, where $\|\cdot\|$ denotes the Euclidean norm. Provided that $\S$ is smooth, there exists a tubular neighbourhood $\B(\S)$ where the closest point function $\cpx$ is well-defined \cite{hirsch2012differential}. Within $\B(\S)$, we define the set of \emph{interior points} as $\I(\S) = \{ \bm{x} \in \B(\S) : \cpx \notin \partial \S \}$ and the set of \emph{exterior points} as $\E(\S) = \{ \bm{x} \in \B(\S) : \cpx \in \partial \S \}$ (see Figure~\ref{fig:terminology}(b)).


The \emph{closest point extension operator} $E$ extends a surface function $u$ to $\B(\S)$ by $E u(\bm{x}) := u(\cpx)$. The operator $E$ propagates the surface values along the normal direction in $\I(\S)$ and the boundary values to $\E(\S)$.

Following \cite{macdonald2011solving}, we define the \emph{modified closest point function} $\cpbarx := \cp(r(\bm{x}))$,
where $r(\bm{x})$ is the reflection of $\bm{x}$ across $S$,
i.e., $r(\bm{x}) := 2\,\cpx - \bm{x}$.
The \emph{modified closest point extension operator} $\bar{E}$ is defined by $\bar{E}u(\bm{x}) := u(\cpbar(\bm{x}))$.
If $\bm{x} \in \I(\S)$, we have $\cpbarx=\cpx$.
Therefore, the extensions $\bar{E}u$ and $Eu$ coincide in $\I(\E)$. For $\bm{x} \in \E(\S)$, the value of $\bar{E}u(\bm{x})$ is $\bar{E}u(r(\bm{x}))$.
In other words, $\bar{E}$ mirrors the interior values to the exterior points, and indeed this is sufficient to implement homogeneous Neumann conditions \cite{macdonald2011solving}, but is not enough for a smooth implementation of \eqref{eqn:gbc}.



\begin{figure}[t]
\centering
    \includegraphics[width=0.9\linewidth]{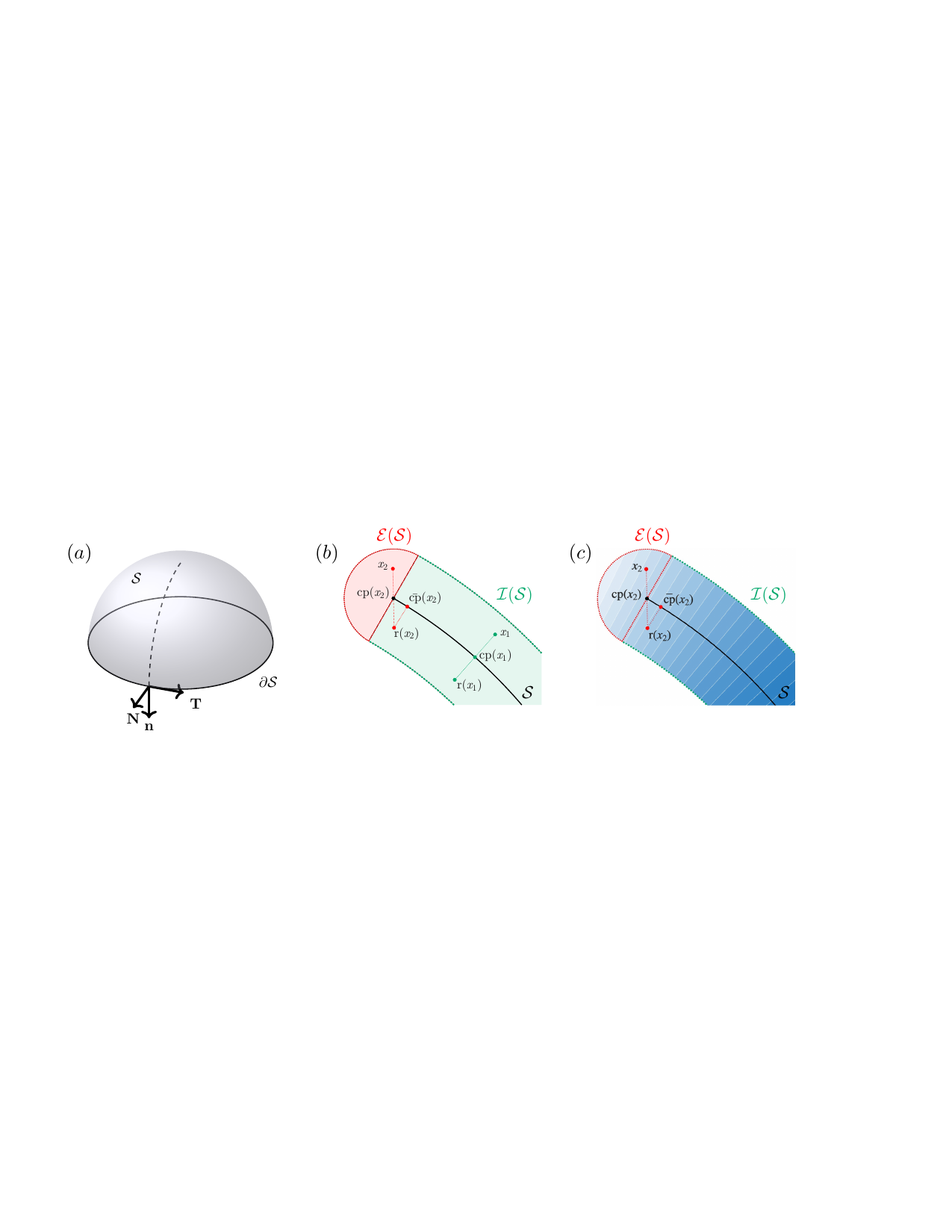}
    \caption{(a) Hemisphere $\mathcal{S}$ with boundary $\partial \mathcal{S}$, normal $\mathbf{N}$, tangent $\mathbf{T}$, and outward co-normal $\mathbf{n}$. (b) An interior point $\bm{x}_1 \in \I(\S)$, we have $\bar{\cp}\left(\bm{x}_1 \right) = {\cp}\left(\bm{x}_1 \right)$. An exterior point $\bm{x}_2 \in \E(\S)$, which satisfies $\cp\left(\bm{x}_j\right) \in \partial \S$ and $\bar{\cp}\left(\bm{x}_j \right) \neq \cp\left(\bm{x}_j\right)$. (c) The smooth extension $\bar{u}$ defined in \eqref{eqn:extension}.}
    \label{fig:terminology}
\end{figure}





\section{The closest point method}

We introduce two lemmas to support our main theorem characterizing a smooth extension which propagates surface values constantly along the normal direction in $\I(\S)$ and into $\E(\S)$ according to the boundary condition \eqref{eqn:gbc}.
We then describe the numerical discretization.

\subsection{Extrapolation for the general boundary condition}


\begin{lem}\label{lem:span}
  Suppose $\bm{x} \in \E(\S)$.
  Let $\mathbf{N} = \mathbf{N}(\cp(\bm{x}))$ and $\mathbf{n} = \mathbf{n}(\cp(\bm{x}))$.
  Then 
  $\bm{x} - \cp(\bm{x})$ lies in $\mathrm{span}\{\mathbf{N}, \mathbf{n}\}$.
\end{lem}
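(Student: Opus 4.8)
The plan is to reduce to the local geometry near the boundary point $\bm{y} := \cp(\bm{x}) \in \partial\S$. Since $\bm{x} \in \E(\S)$, by definition the closest point of $\bm{x}$ on $\S$ lies on $\partial\S$, so $\bm{x}$ is closest to a point on the edge of the surface rather than to an interior point. I would first invoke the first-order optimality condition for the constrained minimization problem defining $\cp(\bm{x}) = \argmin_{\bm{y}' \in \S} \|\bm{x} - \bm{y}'\|$: because the minimizer $\bm{y}$ is constrained to the one-dimensional curve $\partial\S$ (it cannot be in the relative interior of $\S$, else $\bm{x}$ would be an interior point), the gradient of $\bm{y}' \mapsto \tfrac12\|\bm{x} - \bm{y}'\|^2$ restricted to $\partial\S$ must vanish at $\bm{y}$. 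That gradient is $-(\bm{x} - \bm{y})$ projected onto $T_{\bm{y}}(\partial\S) = \mathrm{span}\{\mathbf{T}(\bm{y})\}$, so the condition is exactly $(\bm{x} - \bm{y}) \cdot \mathbf{T}(\bm{y}) = 0$.

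Next I would observe that at the boundary point $\bm{y} \in \partial\S$, the three vectors $\mathbf{T}(\bm{y})$, $\mathbf{n}(\bm{y})$, and $\mathbf{N}(\bm{y})$ form an orthonormal frame of $\mathbb{R}^3$: $\mathbf{T}$ is the unit tangent to the boundary curve, $\mathbf{N}$ is the unit surface normal, and $\mathbf{n} = \mathbf{T} \times \mathbf{N}$ (up to sign) is the co-normal, which is tangent to $\S$ but normal to $\partial\S$ within the surface's tangent plane. This is just the standard Darboux-type frame along the boundary curve. Since $\{\mathbf{T}, \mathbf{n}, \mathbf{N}\}$ is an orthonormal basis, any vector in $\mathbb{R}^3$ — in particular $\bm{x} - \bm{y}$ — decomposes uniquely as a combination of these three. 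The optimality condition from the previous step kills the $\mathbf{T}$-component, leaving $\bm{x} - \bm{y} \in \mathrm{span}\{\mathbf{N}, \mathbf{n}\}$, which is the claim.

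The only genuine subtlety — and the step I would be most careful about — is justifying that the minimizer $\bm{y}$ really is a critical point of the distance along $\partial\S$, i.e. that the constrained first-order condition applies cleanly. This needs $\bm{x}$ to lie in the tubular neighbourhood $\B(\S)$ so that $\cp(\bm{x})$ is unique and well-defined (guaranteed by smoothness of $\S$, as cited), and it uses that $\partial\S$ is a smooth one-dimensional submanifold so that $\mathbf{T}(\bm{y})$ is a well-defined tangent direction. Given $\bm{x} \in \E(\S)$ the minimizer lies on $\partial\S$ rather than in the open surface, so no boundary-versus-interior case distinction is needed; one simply differentiates $t \mapsto \|\bm{x} - \gamma(t)\|^2$ along a smooth local parametrization $\gamma$ of $\partial\S$ through $\bm{y}$ and sets the derivative to zero. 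I would state this as a one-line variational argument rather than belabour it.
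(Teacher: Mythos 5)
Your proposal is correct and follows essentially the same route as the paper's proof: both restrict the distance minimization to a local parametrization $\gamma$ of $\partial\S$, use the first-order condition $\gamma'(t_*)^T[\gamma(t_*)-\bm{x}]=0$ to kill the $\mathbf{T}$-component, and conclude via the orthogonal frame $\{\mathbf{N},\mathbf{T},\mathbf{n}\}$. Your extra care about why the constrained critical-point condition applies is a reasonable elaboration of a step the paper takes for granted, but it is not a different argument.
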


\begin{proof}
Near $\cp(\bm{x})$, there exists a local chart $(U, \phi)$ mapping a neighborhood $U \subset \S$ to the half-plane $\{(\xi,\eta)\in \mathbb{R}^2 : \eta \ge 0\}$, with $\partial \S \cap U$ corresponding to $\eta = 0$. The boundary curve admits the 1D parameterization $\gamma(t) = \phi^{-1}(t,0)$ for $t$ in some interval $I$, and $\gamma$ is smooth and regular. Let $\cp(\bm{x}) = \gamma(t_*)$ with
$t_* = \argmin_{t \in I} \| \gamma(t) - \bm{x} \|^2$.
The first-order condition gives $\gamma'(t_*)^T [\gamma(t_*) - \bm{x}] = 0$, so $\bm{x} - \cp(\bm{x})$ is orthogonal to $\gamma'(t_*)$, the tangent to $\partial \S$ at $\cp(\bm{x})$. Since $\{\mathbf{N}, \mathbf{T}, \mathbf{n}\}$ is an orthogonal basis, it follows that $\bm{x} - \cp(\bm{x}) \in \mathrm{span}\{\mathbf{N}, \mathbf{n}\}$.
\end{proof}
With Lemma~\ref{lem:span}, we characterize how a smooth extension $\bar{u}$ propagates the boundary values into $\E(\S)$, according to the \eqref{eqn:gbc}.
\begin{lem}\label{lem:ihn_BC}
  Let $u$ be a smooth function defined on $\S$ that satisfies \eqref{eqn:gbc}.
  Let $\bar{u}$ be a smooth extension of $u$ in $\B(\S)$ such that $\bar{u}(\bm{x}) = u(\cpx)$ for $\bm{x} \in \I(\S)$.
  Suppose $\bm{x} \in \E(\S)$. Let $\mathbf{w} = (\bm{x}-\cpx)/\|\bm{x}-\cpx\|$.
  Denote the unit outward co-normal $\mathbf{n}(\cpx)$ by $\mathbf{n}$. Then, we have
\begin{equation}\label{eqn:boundary_derivative}
\partial_w \bar{u} \,\vert_{\cpx} = \langle \mathbf{w}, \mathbf{n} \rangle \, j\big(\cpx, \bar{u}(\cpx)\big) \,, \qquad \mbox{where $\langle \cdot, \cdot \rangle$ denotes the Euclidean inner product.}
\end{equation}
\end{lem}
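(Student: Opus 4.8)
Write $\bm{p} := \cpx \in \partial\S$ and recall that $\{\mathbf{N},\mathbf{T},\mathbf{n}\}$ is an orthonormal basis of $\mathbb{R}^3$ at $\bm{p}$. The plan is to decompose $\mathbf{w}$ in this frame, reduce $\partial_w\bar{u}|_{\bm{p}}$ to the $\mathbf{N}$- and $\mathbf{n}$-directional derivatives of $\bar{u}$, and evaluate each separately.

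First I would invoke Lemma~\ref{lem:span}: since $\bm{x}-\bm{p}$ lies in $\mathrm{span}\{\mathbf{N},\mathbf{n}\}$, so does $\mathbf{w}$, hence $\mathbf{w} = \alpha\mathbf{N} + \beta\mathbf{n}$ with $\alpha^2+\beta^2 = 1$, and in particular $\langle\mathbf{w},\mathbf{n}\rangle = \beta$. Since $\bar{u}$ is smooth on $\B(\S)$,
\[
\partial_w\bar{u}\big|_{\bm{p}} = \langle\nabla\bar{u}(\bm{p}),\mathbf{w}\rangle = \alpha\,\langle\nabla\bar{u}(\bm{p}),\mathbf{N}\rangle + \beta\,\langle\nabla\bar{u}(\bm{p}),\mathbf{n}\rangle ,
\]
so it remains to show $\langle\nabla\bar{u}(\bm{p}),\mathbf{N}\rangle = 0$ and $\langle\nabla\bar{u}(\bm{p}),\mathbf{n}\rangle = j(\bm{p},\bar{u}(\bm{p}))$.

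For the $\mathbf{N}$-component I would argue by continuity from the relative interior of $\S$. At any $\bm{q}$ in the interior of $\S$ we have $\bm{q}\in\I(\S)$ and $\cp(\bm{q}+s\mathbf{N}(\bm{q})) = \bm{q}$ for small $|s|$, so $s\mapsto\bar{u}(\bm{q}+s\mathbf{N}(\bm{q})) = u(\bm{q})$ is constant and $\langle\nabla\bar{u}(\bm{q}),\mathbf{N}(\bm{q})\rangle = 0$; since $\bar{u}\in C^1$ and $\mathbf{N}$ is continuous, the map $\bm{q}\mapsto\langle\nabla\bar{u}(\bm{q}),\mathbf{N}(\bm{q})\rangle$ is continuous, and as $\partial\S\subset\overline{\I(\S)}$ I may let $\bm{q}\to\bm{p}$ to conclude $\langle\nabla\bar{u}(\bm{p}),\mathbf{N}\rangle = 0$. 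For the $\mathbf{n}$-component I would first note that $\bar{u}|_\S = u$ — immediate on the interior of $\S$, where $\cp$ restricts to the identity, and extended to $\partial\S$ by continuity of $\bar{u}$, $u$, and $\cp$ — so $\bar{u}$ is a smooth ambient extension of $u$, and the tangential-projection characterization of the surface gradient gives $\nabla_\S u(\bm{p}) = \nabla\bar{u}(\bm{p}) - \langle\nabla\bar{u}(\bm{p}),\mathbf{N}\rangle\mathbf{N}$. Pairing with $\mathbf{n}$ and using $\mathbf{n}\perp\mathbf{N}$,
\[
\langle\nabla\bar{u}(\bm{p}),\mathbf{n}\rangle = \langle\nabla_\S u(\bm{p}),\mathbf{n}\rangle = \partial_n u\big|_{\bm{p}} = j\big(\bm{p},u(\bm{p})\big) = j\big(\bm{p},\bar{u}(\bm{p})\big),
\]
using \eqref{eqn:gbc} and $u(\bm{p}) = \bar{u}(\bm{p})$. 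Substituting both identities into the decomposition yields $\partial_w\bar{u}|_{\bm{p}} = \beta\,j(\bm{p},\bar{u}(\bm{p})) = \langle\mathbf{w},\mathbf{n}\rangle\,j(\cpx,\bar{u}(\cpx))$, which is \eqref{eqn:boundary_derivative}.

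The step I expect to be the main obstacle is the passage to the boundary: $\bm{p} = \cpx$ lies in $\E(\S)$ (it is on $\partial\S$), not in the open set $\I(\S)$ where the identity $\bar{u} = u\circ\cp$ is postulated, so both key identities are obtained by taking limits of relations valid only in the interior of $\S$; making this rigorous uses the $C^1$-regularity of $\bar{u}$, the inclusion $\partial\S\subset\overline{\I(\S)}$, and continuity of the moving frame $\{\mathbf{N},\mathbf{T},\mathbf{n}\}$ up to $\partial\S$ — all of which hold because $\S$ is a smooth open surface. A minor additional point is justifying the tangential-projection formula for $\nabla_\S u$ at a boundary point, where $u$ is only smooth up to $\partial\S$.
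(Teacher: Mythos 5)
Your proof is correct and follows essentially the same route as the paper's: decompose $\mathbf{w}$ in the $\{\mathbf{N},\mathbf{n}\}$ frame via Lemma~\ref{lem:span}, show the $\mathbf{N}$-component of the derivative vanishes, and identify the $\mathbf{n}$-component with $j$ via \eqref{eqn:gbc}. The only difference is that you carefully justify $\partial_N\bar{u}|_{\cpx}=0$ and $\partial_n\bar{u}|_{\cpx}=\partial_n u|_{\cpx}$ by limiting arguments from the relative interior of $\S$ (a genuine subtlety, since points directly above $\partial\S$ lie in $\E(\S)$ where the constancy along normals is not directly postulated), whereas the paper asserts these two facts in one line each.
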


\begin{proof}
By Lemma~\ref{lem:span}, $\mathbf{w} \in \mathrm{span}\{\mathbf{N}, \mathbf{n}\}$, where $\mathbf{N}$ is a unit normal at $\cpx$. We apply the orthogonal decomposition $\mathbf{w} = \langle \mathbf{w}, \mathbf{n} \rangle \mathbf{n} + \langle \mathbf{w}, \mathbf{N} \rangle \mathbf{N}$ and the linearity of directional derivatives to obtain
\[
\partial_w \bar{u} \,\vert_{\cpx} = \langle \mathbf{w}, \mathbf{n} \rangle \, \partial_n \bar{u} \,\vert_{\cpx} + \langle \mathbf{w}, \mathbf{N} \rangle \, \partial_N \bar{u} \,\vert_{\cpx}  \,.
\]
Now $\bar{u}$ is constant along the normal $\mathbf{N}$
so $\partial_N \bar{u} \vert_{\cpx} = 0$.
Secondly, we have $\partial_n \bar{u} \vert_{\cpx} = \partial_n u \vert_{\cpx} = j\big(\cpx, u(\cpx)\big) = j\big(\cpx, \bar{u}(\cpx)\big)$.
Hence, we obtain \eqref{eqn:boundary_derivative}.
\end{proof}




\begin{thm}\label{thm:BCext}
  Suppose $\S$ is a smooth surface with a tubular neighbourhood $\B(\S)$.
  Suppose $\bar{u}: \B(\S) \to \mathbb{R}$ satisfies the extrapolation formula for boundary points 
\begin{equation}\label{eqn:extrap}
\begin{aligned}
\bar{u}(\bm{x}) = \bar{u}(\cpbarx) &+ 2 \big\langle \bm{x} - \cpx, \mathbf{n}(\cpx) \big\rangle \, j\big(\cp(\bm{x}), \bar{u}(\cp(\bm{x}))\big) \\
= \bar{E} \bar{u}(\bm{x}) &+ 2 \big\langle \bm{x} - \cpx, \mathbf{n}(\cpx) \big\rangle \, j\big(\cpx, E \bar{u}(\bm{x}) \big),
\qquad \bm{x} \in \E(\S),
\end{aligned}
\end{equation}
and $\bar{u}(\bm{x}) = \bar{u}(\cpbarx)$ for interior points $\bm{x} \in \I(\S)$.
If $\bar{u}$ is a sufficiently-smooth function when restricted to surfaces points $\bm{y} \in \S$,
then $\bar{u}\vert_{\partial S}$ satisfies the boundary condition \eqref{eqn:gbc} to within $O(\|\bm{x} - \cpx\|^3)$.
\end{thm}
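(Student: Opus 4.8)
The plan is to show that the extrapolation formula \eqref{eqn:extrap} is, up to the claimed order, a second-order Taylor expansion of a genuine smooth extension satisfying the hypotheses of Lemma~\ref{lem:ihn_BC}, and then to read off the co-normal derivative. Fix $\bm{x} \in \E(\S)$, write $\bm{p} = \cpx \in \partial\S$, $h = \|\bm{x} - \bm{p}\|$, and $\mathbf{w} = (\bm{x} - \bm{p})/h$, so that $\bm{x} = \bm{p} + h\mathbf{w}$ and, by Lemma~\ref{lem:span}, $\mathbf{w} \in \mathrm{span}\{\mathbf{N}(\bm{p}), \mathbf{n}(\bm{p})\}$. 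The reflected point is $r(\bm{x}) = 2\bm{p} - \bm{x} = \bm{p} - h\mathbf{w} \in \I(\S)$, and $\cpbarx = \cp(r(\bm{x}))$. First I would Taylor-expand $\bar{u}$ along the segment through $\bm{p}$ in direction $\mathbf{w}$: since $\bar{u}$ restricted to a neighbourhood of $\bm{p}$ (inside $\B(\S)$, along this line) is assumed sufficiently smooth, we have $\bar{u}(\bm{p} \pm h\mathbf{w}) = \bar{u}(\bm{p}) \pm h\,\partial_w\bar{u}|_{\bm{p}} + \tfrac{h^2}{2}\,\partial_w^2\bar{u}|_{\bm{p}} + O(h^3)$, whence the odd-part identity $\bar{u}(\bm{p}+h\mathbf{w}) - \bar{u}(\bm{p}-h\mathbf{w}) = 2h\,\partial_w\bar{u}|_{\bm{p}} + O(h^3)$.

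Next I would identify the two terms on the right-hand side of \eqref{eqn:extrap} with quantities in this expansion. The term $\bar{E}\bar{u}(\bm{x}) = \bar{u}(\cpbarx) = \bar{u}(\cp(r(\bm{x})))$: because $r(\bm{x}) \in \I(\S)$, the interior rule gives $\bar{u}(r(\bm{x})) = \bar{u}(\cp(r(\bm{x})))$, so $\bar{E}\bar{u}(\bm{x}) = \bar{u}(r(\bm{x})) = \bar{u}(\bm{p} - h\mathbf{w})$. For the coefficient, $2\langle \bm{x} - \bm{p}, \mathbf{n}\rangle = 2h\langle \mathbf{w}, \mathbf{n}\rangle$, and $E\bar{u}(\bm{x}) = \bar{u}(\cpx) = \bar{u}(\bm{p})$. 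Thus \eqref{eqn:extrap} reads $\bar{u}(\bm{p}+h\mathbf{w}) = \bar{u}(\bm{p}-h\mathbf{w}) + 2h\langle\mathbf{w},\mathbf{n}\rangle\,j(\bm{p},\bar{u}(\bm{p}))$. Combining with the odd-part identity gives $2h\,\partial_w\bar{u}|_{\bm{p}} + O(h^3) = 2h\langle\mathbf{w},\mathbf{n}\rangle\,j(\bm{p},\bar{u}(\bm{p}))$, i.e. $\partial_w\bar{u}|_{\bm{p}} = \langle\mathbf{w},\mathbf{n}\rangle\,j(\bm{p},\bar{u}(\bm{p})) + O(h^2)$. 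This is precisely relation \eqref{eqn:boundary_derivative} of Lemma~\ref{lem:ihn_BC} up to an $O(h^2)$ error; reversing the orthogonal-decomposition argument of that lemma (using $\partial_N\bar{u}|_{\bm{p}} = 0$, which holds because $\bar{u}$ is constant along normals on the interior side and hence at $\bm{p}$ by continuity/smoothness of the restriction) yields $\langle\mathbf{w},\mathbf{n}\rangle\,\partial_n\bar{u}|_{\bm{p}} = \langle\mathbf{w},\mathbf{n}\rangle\,j(\bm{p},\bar{u}(\bm{p})) + O(h^2)$, and dividing by the nonzero factor $\langle\mathbf{w},\mathbf{n}\rangle$ (nonzero since $\bm{x}\in\E(\S)$ forces a nontrivial co-normal component) gives $\partial_n\bar{u}|_{\bm{p}} = j(\bm{p},\bar{u}(\bm{p})) + O(h^2)$, so the boundary condition \eqref{eqn:gbc} holds to within $O(h^3)$ after multiplying the residual through by the local scale, matching the statement.

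The main obstacle I anticipate is making the smoothness bookkeeping honest: the hypothesis only gives smoothness of $\bar{u}$ restricted to $\S$, whereas the Taylor expansion above is along a line in $\B(\S)$ transverse to $\S$, so I must argue that \eqref{eqn:extrap} together with the interior rule actually \emph{defines} $\bar{u}$ near $\bm{p}$ in a way that inherits enough regularity (the two one-sided pieces — the reflected interior extension and the linear-in-$h$ correction — must glue to a $C^2$-in-$h$ function along $\mathbf{w}$, which is where the factor $2$ and the precise form of the correction term matter, and where any mismatch would degrade the order). A secondary point worth stating carefully is the uniformity of the $O(h^3)$ (equivalently $O(\|\bm{x}-\cpx\|^3)$) constant in $\bm{p}$ along $\partial\S$, which follows from compactness of $\partial\S$ and uniform bounds on the relevant third derivatives and on the geometry (curvature of $\partial\S$, width of the tube), and the observation that $\langle\mathbf{w},\mathbf{n}\rangle$ is bounded below away from zero on $\E(\S)$ for $\bm{x}$ close enough to the boundary. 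Everything else is the routine Taylor estimate and the algebraic identification already sketched.
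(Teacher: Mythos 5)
Your proposal is correct and follows essentially the same route as the paper: both arguments rest on the symmetric-stencil identity $\bar{u}(\bm{x}) - \bar{u}(r(\bm{x})) = 2\|\bm{x}-\cpx\|\,\partial_w\bar{u}\vert_{\cpx} + O(\|\bm{x}-\cpx\|^3)$ (your ``odd-part'' Taylor identity is exactly the paper's central difference formula, using $\bar{u}(r(\bm{x}))=\bar{u}(\cpbarx)$) combined with the decomposition of Lemma~\ref{lem:ihn_BC}. The only differences are cosmetic --- you run the computation in the direction the theorem is stated (extrapolation formula $\Rightarrow$ boundary condition) rather than deriving the formula from Lemma~\ref{lem:ihn_BC}, and you explicitly flag the transverse-regularity and uniformity issues that the paper's proof leaves implicit.
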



\begin{proof}
  Let $\bm{x} \in \E(\S)$.
  Applying the central difference formula to $\partial_w \bar{u}\vert_{\cpx}$ with the stencil $\{\bm{x}, r(\bm{x})\}$, we have
\begin{equation}\label{eqn:central_diff}
    \partial_w \bar{u}\vert_{\cpx} = \frac{\bar{u}(\bm{x}) - \bar{u}(r(\bm{x}))}{\|\bm{x} - r(\bm{x})\|} + O(\|\bm{x} - \cpx\|^2)
    = \frac{\bar{u}(\bm{x}) - \bar{u}(\cpbarx)}{2\|\bm{x} - \cp(\bm{x})\|} + O(\|\bm{x} - \cpx\|^2),
\end{equation}
where the second equality follows by $\bar{u}(r(\bm{x})) = \bar{u}(\cpbarx)$ and that $\cpx$ is the mid-point of the segment connecting $\bm{x}$ and $r(\bm{x})$.
Substituting \eqref{eqn:central_diff} into \eqref{eqn:boundary_derivative}, we obtain the extrapolation formula
and the error term.
\end{proof}

\begin{rem}
In practice, $\| \bm{x} - \cpx\|$ is bounded by the radius of tubular neighbourhood, which is a small multiple of the mesh size $\Delta x$ \cite{ruuth2008simple, macdonald2010implicit}.
Hence, the error term in Theorem~\ref{thm:BCext} is $O(\Delta x^3)$.
\end{rem}

We can combine the extension condition $\bar{u} = \bar{E}\bar{u}$ in $\I(\S)$ with the extrapolation formula \eqref{eqn:extrap} to define an extension in all of $\B(\S)$:
\begin{equation}\label{eqn:extension}
\bar{u}(\bm{x}) = \bar{E}\,\bar{u}(\bm{x}) + 2\,\chi_{\E(\S)}(\bm{x}) \, \langle \bm{x} - \cpx, \mathbf{n}\rangle \, j\big(\cpx, E\,\bar{u}(\bm{x})\big), \quad \bm{x} \in \B(\S),
\end{equation}
where $\chi_{\E(\S)}$ is the indicator function of $\E(\S)$. 


\subsection{Embedding formulation and discretization}


We illustrate how to incorporate the extrapolation method into the CPM through an elliptic example:
\begin{equation}\label{eqn:spoisson}
\Delta_\S u - c\,u = f \quad \text{in $\S$,} \qquad \mbox{subject to \eqref{eqn:gbc} in $\partial\S$,}
\end{equation}
where $\Delta_\S$ is the Laplace--Beltrami operator on $\S$.
Let $\bar{u}$ be the extension of the surface solution $u$ according to \eqref{eqn:extension}. By the Laplacian principles \cite{ruuth2008simple, chen2015closest}, the Cartesian Laplacian $\Delta$ applied to the extension $\bar{u}$ coincides with $\Delta_\S u$ on $\S$, i.e., $\Delta \bar{u} \big|_{\S} = \Delta_\S u$. It follows that $E(\Delta \bar{u}) = E(\Delta_\S u)$. Therefore, by applying the extension operator $E$ to \eqref{eqn:spoisson}, we obtain the embedding equation $ E(\Delta \bar{u}) - E (c u + f) = 0$. Using the idempotent property $E^2 = E$ \cite{marz2012calculus}, this can be rewritten in the compact form
\begin{equation}\label{eqn:spoisson_embedding}
E\left(\Delta \bar{u} - c\bar{u} - \bar{f}\right) = 0 \,, \quad \text{in } \B(\S),
\end{equation}
where $\bar{f} := Ef$.
Following \cite{von2013embedded, chen2015closest, iyaniwura2021optimization}, we combine \eqref{eqn:spoisson_embedding} and \eqref{eqn:extension} via a penalty formulation:
\begin{equation}\label{eqn:embedding_equation}
E\left(\Delta \bar{u} - c\,\bar{u} - \bar{f}\right) - \gamma\Big[\bar{u} - \bar{E}\bar{u} - 2 \chi_{\E(\S)}(\bm{x}) \langle \bm{x} - \cpx, \mathbf{n}\rangle j(\cpx, E \bar{u}(\bm{x}))\Big] = 0 \,, \quad \bm{x} \in \B(\S) \,,
\end{equation}
where $\gamma > 0$ is a penalty parameter. In practice, we take $\gamma = 2d/(\Delta x)^2$ with $d=3$ as the embedding space dimension.

Let $\mathsf{u} \in \mathbb{R}^m$ denote the numerical approximation of $\bar{u}$ at grid points $\bm{x}_i$, $i=1,\dots,m$. Let $\mathsf{E}, \bar{\mathsf{E}} \in \mathbb{R}^{m\times m}$ be the cubic interpolation matrices such that $\left[\mathsf{E} \mathsf{u}\right]_i \approx u(\cp(\bm{x}_i))$ and $\left[\bar{\mathsf{E}} \mathsf{u}\right]_i \approx u(\bar{\cp}(\bm{x}_i))$ (see \cite{macdonald2010implicit, chen2015closest}). The discretized embedding system of \eqref{eqn:embedding_equation} reads
\begin{equation}\label{eqn:embedding_equation_discretized}
\mathsf{E} (\mathsf{L} \mathsf{u} - c\mathsf{u} - \mathsf{f}) - \gamma (\mathsf{u} - \bar{\mathsf{E}} \mathsf{u} - \mathsf{D}\,\mathsf{j}) = 0,
\end{equation}
where $\mathsf{D} \in \mathbb{R}^{m\times m}$ is the diagonal matrix with diagonal entry $\mathsf{D}_{i,i} = 2 \chi_{\E(\S)}(\bm{x}_i) \langle \bm{x}_i - \cp(\bm{x}_i), \mathbf{n}_i \rangle$, and $\mathsf{f}, \mathsf{j}\in\mathbb{R}^m$ are defined by $\mathsf{f}_i = f(\cp(\bm{x}_i))$ and $\mathsf{j}_i = j(\cp(\bm{x}_i), [\mathsf{E} \mathsf{u}]_i)$, respectively. 

\begin{rem}\label{rem:conormal_approximation}
When the analytical form of the unit co-normal $\mathbf{n}$ is unavailable, it can be approximated using surface reconstruction techniques \cite{ye2012multigrid, zhao2000implicit, lee2017revisiting}.
Alternatively, for $\bm{x}_i \in \E(\S)$, we approximate
$\mathbf{n}(\bm{x}_i)$ by $\frac{\cp(\bm{x}_i) - \bar{\cp}(\bm{x}_i)}
{\|\cp(\bm{x}_i) - \bar{\cp}(\bm{x}_i)\|}$,
which we justify as follows. By Taylor expansion,
$$
\bar{\cp}(\bm{x}_i) = \cp\!\left(\cp(\bm{x}_i) + (\cp(\bm{x}_i) - \bm{x}_i)\right)
= \cp(\bm{x}_i) + D\cp|_{\cp(\bm{x}_i)}(\cp(\bm{x}_i) - \bm{x}_i)
+ O(\|\bm{x}_i - \cp(\bm{x}_i)\|^2),
$$
where $D\cp$ is the Jacobian of $\cp$. Since $D\cp$ projects onto the tangent space of $\S$ \cite{marz2012calculus}, and $\bm{x}_i - \cp(\bm{x}_i)$ is orthogonal to the boundary tangent $\mathbf{T}(\bm{x}_i)$ (by Lemma~\ref{lem:span}), the vector $D\cp|_{\cp(\bm{x}_i)}(\bm{x}_i - \cp(\bm{x}_i))$ is parallel to $\mathbf{n}(\bm{x}_i)$. Hence, the above formula approximates $\mathbf{n}(\bm{x}_i)$ up to 
$O(\|\bm{x}_i - \cp(\bm{x}_i)\|^2)$. 
If $\|\cp(\bm{x}_i) - \bar{\cp}(\bm{x}_i)\|$ falls below a small threshold (taken as $10^{-4}$ in our experiments), indicating $\bm{x}_i - \cp(\bm{x}_i)$ is almost orthogonal to $\mathbf{n}(\bm{x}_i)$,
we set $\mathsf{D}_{i,i} = 0$ to avoid division-by-zero.

\end{rem}

\section{Numerical examples}

In this section, we present numerical examples to validate the extrapolation method in CPM. All computations were performed in \textsc{Matlab} R2024a; linear systems are solved using the \texttt{backslash} operator, and the eigenvalues are computed with the \texttt{eigs} function.
For the computation of the outward normal $\mathbf{n}$, we used the approximation based on $\cp(\bm{x}) - \bar{\cp}(\bm{x})$ introduced in Remark~\ref{rem:conormal_approximation}.


\subsection{Poisson equation}

Consider the surface Poisson equation with Robin condition,
\begin{equation}\label{eqn:poisson}
\Delta_{\mathcal{S}} u = f \quad \text{on } \mathcal{S}, 
\qquad \partial_n u + \kappa u = g \quad \text{on } \partial\mathcal{S}.
\end{equation}
This is a special case of \eqref{eqn:spoisson} with $c=0$ and $j(\bm{x},u) = -\kappa u + g(\bm{x})$, which we substitute into \eqref{eqn:embedding_equation_discretized} to obtain the linear system
\begin{equation}\label{eqn:poisson_numerical}
\mathsf{Au = b} ,, \qquad \text{with } \mathsf{A} = \bar{\mathsf{E}} \mathsf{L} - \gamma \left( \mathsf{I} - \bar{\mathsf{E}} + \kappa \mathsf{D}\mathsf{E} \right), \quad
\mathsf{b} = \bar{\mathsf{E}} \mathsf{f} - \gamma \mathsf{g} \,,
\end{equation}
where $\mathsf{g} \in \mathbb{R}^m$ with $\mathsf{g}_i = g(\bm{x}_i)$. For the convergence study, we prescribe the exact solution
\begin{equation}\label{eqn:poisson_solution}
u(\phi,\theta)=\cos(2\phi)\sin^2\theta + \sin(3\phi)\sin^3\theta
\end{equation}
on the unit upper hemisphere $\S = \left\{ (\phi,\theta) : \phi \in [0,2\pi], \,\, \theta \in [0,\pi/2] \right\}$. One can verify that \eqref{eqn:poisson_solution} is the Poisson solution to \eqref{eqn:poisson} with $\kappa=1$,
$f(\phi,\theta)=-6\sin^2\theta \cos(2\phi)-12\sin^3\theta \sin(3\phi)$, and $g(\phi)=\cos(2\phi)+\sin(3\phi)$ on $\partial\mathcal{S} = \{ (\phi,\theta): \theta=\pi/2\}$. The numerical solution $\mathsf{u}$ obtained from \eqref{eqn:poisson_numerical} is compared against the exact solution \eqref{eqn:poisson_solution} with successively refined mesh size $\Delta x$. As shown in Table~\ref{tab:robin_conv}, CPM with proposed extrapolation method achieves second-order accuracy.

\begin{table}[t]
  \centering
  \caption{%
    Numerical convergence of the Poisson problem \eqref{eqn:poisson}.
  }
  \label{tab:robin_conv}
  \begin{tabular}{cccc}
    \toprule
    $\Delta x$ & Length($\mathsf{u}$) & Relative error in max--norm & Order \\
    \midrule
    0.1    & 7,161   & 5.4284e-3 & --     \\
    0.05   & 24,321  & 1.2647e-3 & 2.1017 \\
    0.025  & 89,989  & 3.0515e-4 & 2.0512 \\
    0.0125 & 345,297 & 7.7049e-5 & 1.9857 \\
    \bottomrule
  \end{tabular}
\end{table}

\subsection{Steklov eigenvalue problem}
Next, we consider the Steklov eigenvalue problem \cite{girouard2017spectral}, which we compute nontrivial eigenpairs $(\phi,\sigma)$ that satisfy
\begin{equation}\label{eqn:steklov}
    \Delta_\S \phi = 0 \quad \text{in } \S, 
    \qquad \partial_n \phi = \sigma \phi \quad \text{on } \partial\S.
\end{equation}
This problem arises as a special case of \eqref{eqn:spoisson} with $c=0$, $f \equiv 0$, and $j(\bm{x},\phi) = \sigma \phi$. The corresponding discretized embedding formulation of \eqref{eqn:steklov} is given by
    $\bar{\mathsf{E}} (\mathsf{L \phi})  
    - \gamma \left[ \mathsf{\phi} - \bar{\mathsf{E}} \mathsf{\phi} - \mathsf{D}(\sigma \, \mathsf{E} \mathsf{\phi}) \right] = \mathsf{0},$
    which leads to the generalized eigenvalue problem $\mathsf{A u} = \sigma \mathsf{B u}$, where $\mathsf{A} = \bar{\mathsf{E}} \mathsf{L} - \gamma(\mathsf{I}-\bar{\mathsf{E}})$ and $\mathsf{B} = -\gamma \, \mathsf{D}\mathsf{E}$. We show the computed eigenfunctions for the unit hemisphere and M\"obius strip in Figure~\ref{fig:steklov_combined}.
    A conformal mapping argument shows that the Steklov spectrum of the unit hemisphere coincides with that of the unit disk, whose eigenvalues are $\sigma_0 = 0$ and $\sigma_n = n$ (with multiplicity two) for $n \geq 1$ \cite{girouard2017spectral} . We demonstrate in Figure~\ref{fig:steklov_combined} that our method exhibits second-order accuracy for a selected set of eigenvalues. 

\begin{figure}[thb]
    \centering
    \includegraphics[width=0.825\linewidth]{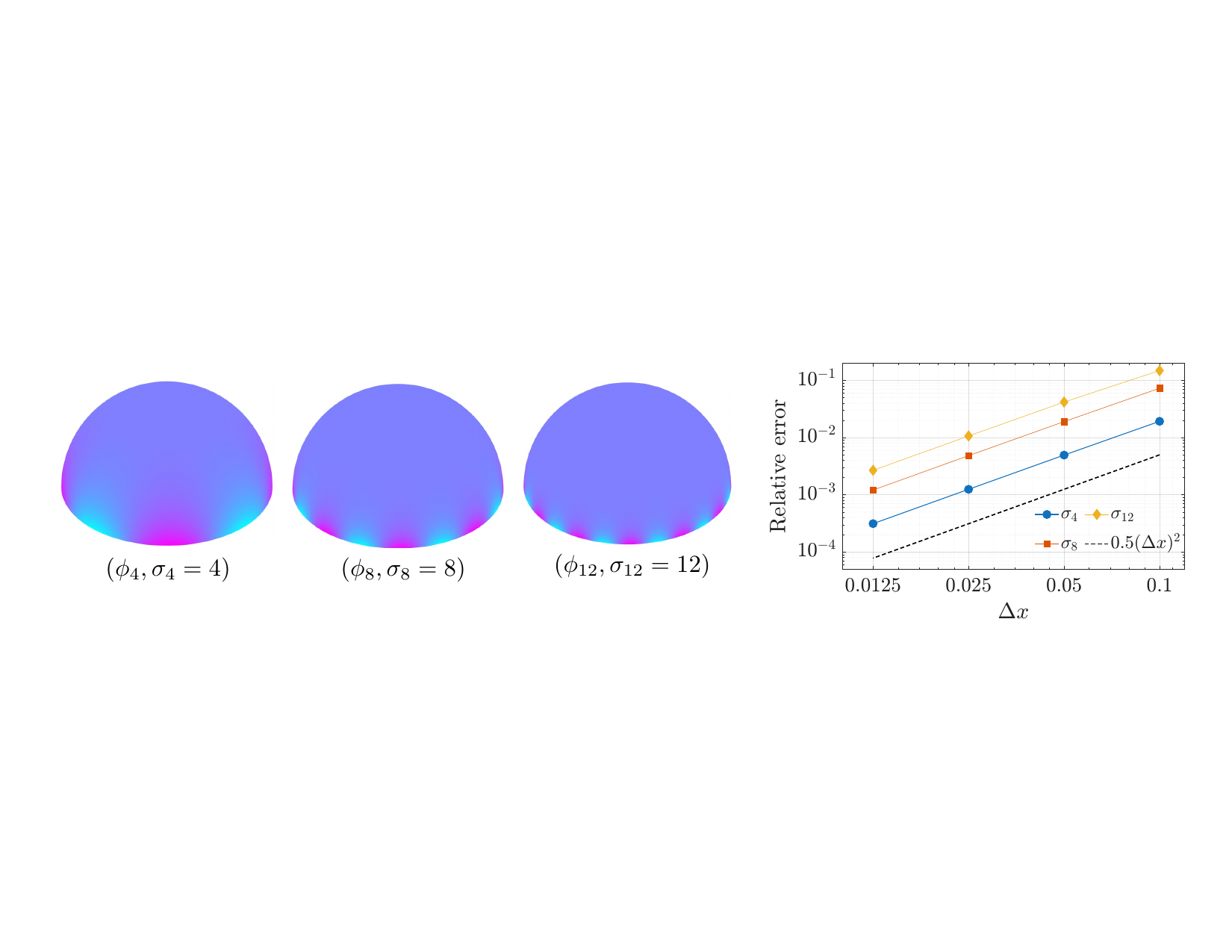}
            
    \vspace{3mm}
    \includegraphics[width=0.9\linewidth]{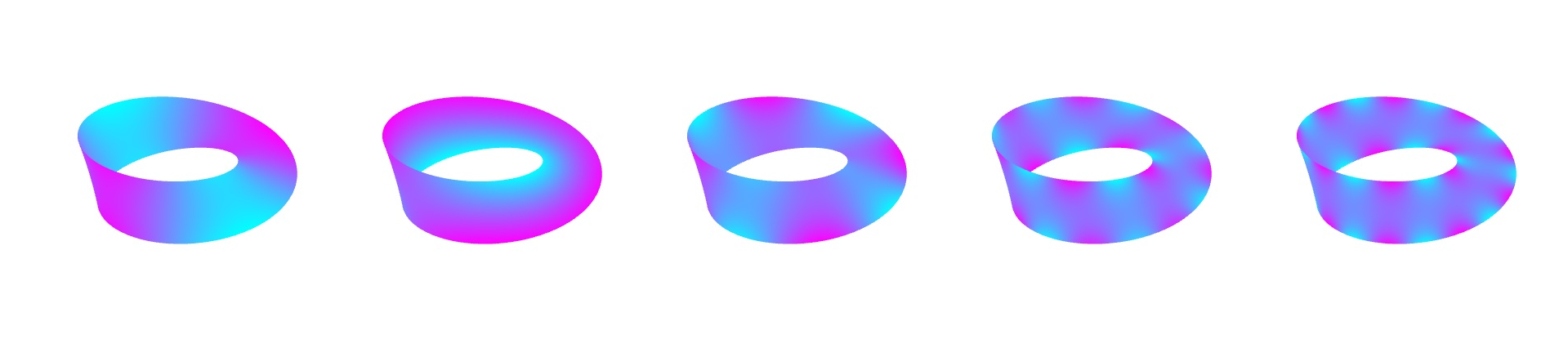}
    
    \caption{\textbf{Steklov eigenfunctions for different geometries}. Top: Unit hemisphere, a selection of the Steklov eigenfunctions, numerically computed with mesh size $\Delta x = 0.025$ ($89,989$ grid points). The rightmost plot shows the second-order accuracy of these selected eigenvalues. Bottom: M\"obius strip, a selection of the Steklov eigenfunctions, numerically computed with mesh size $\Delta x = 0.025$ ($72,062$ grid points).}
    \label{fig:steklov_combined}
\end{figure}

\subsection{Gray--Scott reaction-diffusion system}
Finally, we use our proposed numerical method to demonstrate how the temporal dynamics of the Gray--Scott pattern formation system
\begin{equation*}
\partial_t u = D_u \Delta_{\mathcal{S}} u - u v^2 + F(1 - u) \,, \quad
\partial_t v = D_v \Delta_{\mathcal{S}} v + u v^2 - (F + k)v \,, \quad \text{in $\S$},
\end{equation*}
are affected by Robin-type boundary conditions
$\partial_n u + \kappa u = 0, \partial_n v + \kappa v = 0$ on $\partial \S$. The Robin condition represents leakage of the concentrations $u$ and $v$ through the boundary, controlled by the leakage parameter $\kappa > 0$.
We choose $\S$ to be a M\"obius strip and set the parameters to $F = 0.010$, $k = 0.042$, $D_u = 8\times10^{-5}$, and $D_v = 0.4 D_u$.
Figure~\ref{fig:GS} shows that the system exhibits weak patterns that dissipate in finite time under no-flux conditions ($\kappa = 0$; homogeneous Neumann condition),
whereas the case $\kappa = 10$ induces chaotic and persistent spatiotemporal patterns.


\begin{figure}[htb]
    \centering
    \includegraphics[width=0.9\linewidth]{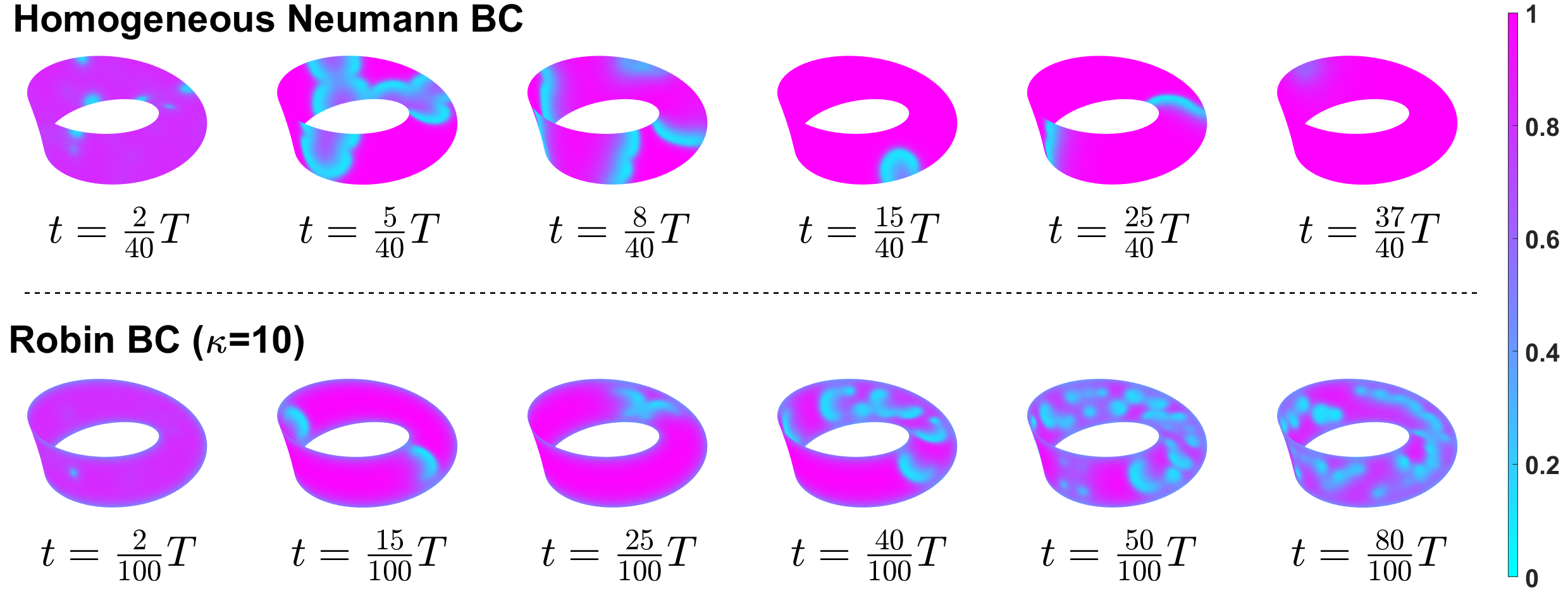}
    \caption{\textbf{Gray-Scott model with Robin boundary condition}. Numerical solution of the substrate $u$ on the surface of the M\"obius strip at different time points. Top row: $\kappa=0$ (homogeneous Neumann condition), with total simulation time $T=4000$. Bottom row: $\kappa=10$, $T=10000$.}
    \label{fig:GS}
\end{figure}


\printbibliography

\end{document}